\DeclareMathAlphabet{\mathsfsl}{OT1}{cmss}{m}{sl}
\newcommand{\PreserveBackslash}[1]{\let\temp=\\#1\let\\=\temp}
\newcolumntype{C}[1]{>{\PreserveBackslash\centering}p{#1}}
\newcolumntype{R}[1]{>{\PreserveBackslash\raggedleft}p{#1}}
\newcolumntype{L}[1]{>{\PreserveBackslash\raggedright}p{#1}}
\renewcommand{\vartheta}{\Theta}
\definecolor{mygreen}{rgb}{0.1,0.75,0.2}
\numberwithin{equation}{section}
\newtheorem{thm}{Theorem}[section]
\theoremstyle{definition}
\begin{document}

\theoremstyle{definition}
\newtheorem{remark}{Remark}

\newcommand{\vertiii}[1]{{\left\vert\left\vert\left\vert #1
    \right\vert\right\vert\right\vert}}

\begin{frontmatter}

\title{An asymptotically compatible meshfree quadrature rule for non-local problems with applications to peridynamics}

\address[nt]{Center for Computing Research, Sandia National Laboratories, Albuquerque NM}
\address[yy]{Department of Mathematics, Lehigh University, Bethlehem, PA 18015, USA}

\author[nt]{Nathaniel Trask}\ead{natrask@sandia.gov}
\author[yy]{Huaiqian You}
\author[yy]{Yue~Yu\corref{cor1}}\ead{yuy214@lehigh.edu}
\author[nt]{Michael Parks}


\begin{abstract}
We present a meshfree quadrature rule for compactly supported non-local integro-differential equations (IDEs) with radial kernels. We apply this rule to develop a strong-form meshfree discretization of a peridynamic solid mechanics model that requires no background mesh. Existing discretizations of peridynamic models have been shown to exhibit a lack of asymptotic compatibility to the corresponding linearly elastic local solution. By posing the quadrature rule as an equality constrained least squares problem, we obtain asymptotically compatible convergence via reproducability constraints. Our approach naturally handles traction-free conditions, surface effects, and damage modeling for both static and dynamic problems. We demonstrate high-order convergence to the local theory by comparing to manufactured solutions and to cases with crack singularities for which an analytic solution is available. Finally, we verify the applicability of the approach to realistic problems by reproducing high-velocity impact results from the Kalthoff-Winkler experiments.

\end{abstract}

\begin{keyword}
non-local, peridynamics, meshfree, asymptotic compatibility
\end{keyword}

\end{frontmatter}

\tableofcontents

\section{Background and governing equations}
We seek solutions $u \in L^2(\Omega)$ on a simply connected compact domain $\Omega \subset \mathbb{R}^d$ with Lipschitz continuous boundary $\partial \Omega$, to general non-local IDEs of the form
\begin{equation}\label{eq:generalCase}
	\mathcal{L}_\delta[u](\mathbf{x}) = \int_{B(\mathbf{x},\delta)} K(\mathbf{x},\mathbf{y}) \left(u(\mathbf{y})-u(\mathbf{x})\right) \, d\mathbf{y} = \mathbf{f}(\mathbf{x}),
\end{equation}
where $u \in C^m(\Omega)$, $\mathbf{f}$ is data, and $K$ is a kernel which for fixed $\mathbf{x}$ is supported on the ball of radius $\delta$,  $B(\mathbf{x},\delta)$. We assume that the kernel is radial, i.e. $K(\mathbf{x},\mathbf{y}) = K(\mathbf{x} - \mathbf{y})$. 
For many applications, the kernel is singular as $\mathbf{y} \rightarrow \mathbf{x}$ and special care must be taken to handle the singularity.
Non-local problems of this form require the specification of appropriate volumetric boundary conditions to be well-posed. We assume a collar of thickness $\delta$ around $\partial \Omega$, defining $\Omega^\delta = \underset{\mathbf{\mathbf{x}} \in \Omega}\bigcup {B(\mathbf{\mathbf{x}},\delta)}$ with volumetric boundary $\partial^\delta \Omega = \Omega^\delta \setminus \Omega$.

Although the technique introduced here is applicable to general problems of this form, we focus in this work on bond-based peridynamic solid models, specifically the linearized microelastic brittle model \cite{silling_2000, macek_2007, silling_2005_2}
\begin{equation}\label{eq:LPSequation}
    \rho(\mathbf{x}) \frac{d^2}{dt^2} \mathbf{u}(\mathbf{x}) = \int_{B(\mathbf{x},\delta)} c \frac{\bm{\xi} \otimes \bm{\xi}}{||\bm{\xi}||^3} \left(\mathbf{u}(\mathbf{y}) - \mathbf{u}(\mathbf{x}) \right)\, d\mathbf{y} + \mathbf{f}_b(\mathbf{x}),
\end{equation}
where $\rho$ is density, $\mathbf{f}_b$ is a body force density, $\bm{\xi}$ is the vector between two material points in the reference configuration, $\mathbf{u}(\mathbf{x})$ is the displacement of $\mathbf{x}$ in the deformed configuration, and $c$ is a material constant that is related to the kernel support $\delta$ (refered to in peridynamics literature as the horizon) and the bulk modulus $\kappa$ via \cite{emmrich2007analysis}
\begin{equation}\label{eq:scaling}
c = 
\begin{cases}
\frac{72 \kappa}{5 \pi \delta^3} \qquad d = 2\\
\frac{18 \kappa}{ \pi \delta^4} \qquad d = 3
\end{cases}.
\end{equation}

We consider equilibrium solutions for which the static problem may be written in the form of \eqref{eq:generalCase} as
\begin{equation}\label{eq:nonl}
\mathcal{L}_\delta [\mathbf{u}](\mathbf{x}) = -\int_{B(\mathbf{x},\delta)} c \frac{\bm{\xi} \otimes \bm{\xi}}{||\bm{\xi}||^3} \left(\mathbf{u}(\mathbf{y}) - \mathbf{u}(\mathbf{x}) \right)\, d\mathbf{y} = f(\mathbf{x}).
\end{equation}

We focus on this model due to the fact that the kernel may be evaluated analytically for the case of displacements $\bm{u}$ belonging to the space of $m^{th}$ order vector polynomials, $P_m(\mathbb{R}^d)$. While this simplifies our presentation, the technique presented in this work generalizes to any non-local operator whose radial kernel has compact support $\delta$. Practically, this model is conceptually straightforward, but presents sufficient complexity to demonstrate the ease with which our approach may be integrated into a standard engineering workflow to study fracture mechanics problems.

Peridynamic models are popular due to their ability to describe the motion of solids while requiring only that $\bm{u} \in \mathbf{L}^2(\Omega)$ \cite{mengesha_du_2014}, in comparison to local models that typically require $\bm{u} \in \mathbf{H}^1(\Omega)$. This allows them to be applied to problems in fracture mechanics, for which the displacement field is discontinuous at cracks. Particle discretizations of peridynamic models are particularly appealing due to their ability to robustly handle large deformations and topological changes occurring during violent fracture without the need to maintain a high-quality mesh. For both mesh-based and meshless discretizations of peridynamic models, the development of efficient and accurate quadrature rules for the integral term poses a significant challenge. In the mesh-based setting, to maintain optimal second order accuracy, the integral of finite element shape functions must be calculated over the intersection of their support and the support of the peridynamic horizon \cite{seleson2016convergence}. Due to the computational expense of these geometric calculations, low order approximations are often used in practice that limit the accuracy of the method.

A key desirable property for any discretization of peridynamics is that it be faithful to the local continuum theory, in the case where the displacement possesses sufficient regularity that the local solution exists. As the peridynamic horizon $\delta$ limits to zero, a peridynamic model reduces to its local counterpart, expressed as a familiar PDE. For the bond-based model considered here this corresponds to a linearly elastic material with bulk modulus $\kappa$ and Poisson ratio $\nu = \frac14$ \cite{emmrich2007well}. A discrete peridynamic model is characterized by two different length scale parameters: the horizon $\delta$ and the characteristic discretization length scale, $h$. It is desirable that in the limit as both of these are simultaneously driven to zero, the solutions of the nonlocal discrete model should recover solutions of the corresponding local continuum model. Discretizations that achieve this without any restrictions on how $\delta, h$ limit to zero are called \emph{asymptotically compatible} discretizations \cite{tian_2014}. In particular, it was shown in \cite{tian_2014} that for piecewise constant approximations, the most commonly used discretization of peridynamics, convergence to the local solution could be obtained if $h = o(\delta)$ as $\delta \rightarrow 0$. In contrast, a piecewise linear discretization was shown to be asymptotically compatible, meaning convergence to the local solution can be demonstrated as $\delta \rightarrow 0$ for a fixed ratio of $\delta/h := M > 0$ \cite{bobaru2009convergence}. Generally, particle methods may be interpreted as piecewise constant discretizations and are therefore susceptible to this issue.

To remedy this limitation for particle discretizations, we seek to generate consistent meshfree quadrature rules of the form
\begin{equation}\label{quadAnsatz}
\mathcal{L}_\delta[u](\mathbf{x}_i) \approx \mathcal{L}^h_\delta[u](\mathbf{x}_i):= \sum_j K(\mathbf{x}_i,\mathbf{x}_j) (u_j-u_i) \omega_{j}
\end{equation}
by using local optimization problems to seek quadrature weights $\omega_{j}$ associated with a local neighborhood of particles that are exact for certain classes of functions. In this way, we are able to obtain an asymptotically compatible discretization that does not depend on the ratio $\frac{h}{\delta}$ to obtain convergence. The application of this quadrature does not require in any way a background grid and is therefore truly meshfree; the quadrature weights are obtained using only a list of neighbors lying within the support of $K$ and, after solving a small local optimization problem requiring only the inversion of a small linear system, weights are obtained which may be used in the standard workflow of an existing peridynamics code.

An often overlooked but important issue with these types of discretizations is their sensitivity to the symmetry of the underlying particle arrangement. In the continuous case, any first-order polynomial vector field $\bm{p}$ lies in the null-space of the operator (i.e. $\mathcal{L}_\delta[\bm{p}]=0$), due to the anti-symmetry of the kernel. For standard discretizations of peridynamics, this necessitates that the quadrature points and weights in Equation \ref{quadAnsatz} be similarly antisymmetric, which limits their applicability to Cartesian particle configurations. A key feature to our approach lacking in existing particle discretizations of peridynamics will be the ability to handle anisotropic particle configurations while maintaining accuracy.

We organize the remainder of the paper as follows. We first present in Section \ref{sec:quad} a definition of a constrained quadratic program (QP) that we use to generate the quadrature weights. We present a proof of the accuracy of this quadrature rule, quantifying the effect of the singularity on the convergence rate of the truncation error. We then outline in Section \ref{sec:peri} how this rule may be specialized to the bond-based peridynamic case. In Section \ref{sec:manu1}, we use manufactured solutions to demonstrate the convergence of the discrete model to both the non-local and local solutions. We use numerical experiments in Section \ref{sec:damage} to demonstrate that with this quadrature rule, a classical bond-breaking damage model recovers a traction-free local boundary condition at crack surfaces as $\delta \rightarrow 0$. In Section \ref{sec:typeIcrack}, we use the approach to recover displacement fields for a type I fracture mode for which an analytic solution to the corresponding local case is available, highlighting the consistent treatment of cracks and traction boundary conditions in the scheme. Finally, in \ref{sec:KWexperiment} we use the scheme to reproduce the Kalthoff-Winkler fracture experiment, demonstrating the applicability of this approach to realistic engineering applications involving dynamic fracture.

\section{Meshfree quadrature rule and analysis}\label{sec:quad}
In this section we present the quadrature rule in a form which may be applied to general non-local models beyond peridynamics. We then provide a proof of its accuracy, from which we may see that an asymptotically compatible discretization is obtained if the degree of polynomial reproduction is greater than the order of singularity in the non-local kernel.
\subsection{Optimization problem}
We define a collection of quadrature points $\mathbf{X}_q = \left\{\mathbf{x}_i\right\}_{i=1,\dots,N_q} \subset B(\mathbf{x},\delta)$ and seek a quadrature rule for a function $f \in \mathbf{V}$ of the following form.
\begin{equation}\label{quadme}
 \int_{B(\mathbf{x})} f\, d\mathbf{y} = \sum_{j \in \mathbf{X}_q} f_j\, \omega_j
\end{equation}
where $\mathbf{V}$ is a function space, and $\left\{\omega_j\right\}$ are a collection of weights determined to exactly reproduce a finite dimensional subspace $\mathbf{V}_h \subset \mathbf{V}$. Informally, this resembles the construction of Gauss quadrature rules, in which quadrature points and weights are selected to exactly reproduce polynomials of maximal degree via interpolation conditions. However, in this case it is assumed that there is no control over the location of the points, as they may be evolving in a Lagrangian manner under the physics of the problem. Further, interpolation in higher than one dimension of unstructured data is in general not possible (see, e.g. \cite{wendland2004scattered}), and so we instead seek to establish the weights via the following least squares problem subject to equality constraints to enforce reproduction:
\begin{align}\label{eq:quadQP}
  \underset{\left\{\omega_j\right\}}{min} \sum_j \omega_j^2 \quad
  \text{such that}, \quad
  I_h[p] = I[p] \quad \forall p \in \mathbf{V}_h
\end{align}
where $I[u]$ and $I_h[u]$ denote the quadrature functional and its approximation (Equation \ref{quadme}). For example, for accuracy reasons one might select $\mathbf{V}_h$ as the space of $m^{th}$-order polynomials $P_m$. Due to our interest in nonlocal applications, we will select $\mathbf{V}_h = P_m \cup S_{K,n,\mathbf{x}}$, where

\begin{equation}\label{eq:reproducingspace}
  S_{K,n,\mathbf{x}} := \left\{ K(\mathbf{x},\mathbf{y}) f(\mathbf{y}) \,|\, f \in P_n \right\}
\end{equation}
and therefore we seek quadrature rules which may reproduce exactly both $m^{th}$ order polynomials and $n^{th}$ order polynomials integrated against a given, possibly singular, kernel $K$.

Quadrature weights may be obtained from Equation \ref{eq:quadQP} by solving the saddle-point problem
\begin{equation}\label{eqn:QPsaddle}
\begin{bmatrix}
\mathbf{I} & \mathbf{B}^\intercal \\
\mathbf{B} & \mathbf{0}
\end{bmatrix}
\begin{bmatrix}
\bm{\omega} \\
\bm{\lambda}
\end{bmatrix}
 = 
 \begin{bmatrix}
 \mathbf{0} \\
 \mathbf{g}
 \end{bmatrix},
\end{equation}
where $\mathbf{I}\in\mathbb{R}^{N_q\times N_q}$ is the identity matrix, $\bm{\omega} \in \mathbb{R}^{N_q}$ consists of the quadrature weights, $\bm{\lambda} \in \mathbb{R}^{dim(\mathbf{V}_h)}$ are a set of Lagrange multipliers used to enforce reproducability, $\mathbf{B} \in \mathbb{R}^{N_q \times dim(\mathbf{V}_h)}$ consists of the reproducing set evaluated at each quadrature point (i.e. $B_{\alpha j} = p^\alpha(\mathbf{x}_j), \, \text{for all } p^\alpha \in \mathbf{V}_h$), and $\mathbf{g} \in \mathbb{R}^{dim(\mathbf{V}_h)}$ consists of the integral of each function in the reproducing set over the ball (i.e. $g^\alpha = I[p^\alpha]$). Equation \ref{eqn:QPsaddle} may easily be solved using a direct solver. We stress that in contrast to mesh-based quadrature rules that require the calculation of intersections between elements and $B(\mathbf{x},\delta)$, the solution of this problem maps easily onto modern computer archtectures for which high-performance dense linear algebra libraries are readily available. In particular, by eliminating the constraints, the problem may efficiently be obtained by solving
\begin{equation}
\bm{\omega} = \mathbf{B}^\intercal \mathbf{S}^{-1}\mathbf{g}
\end{equation}
where $\mathbf{S} = \mathbf{B} \mathbf{B}^\intercal$ is the Schur complement. Therefore, the weights may be obtained in $O(dim(\mathbf{V}_h)^3)$ operations using a direct solver. For problems where the reproducing constraints are redundant, $\mathbf{S}^{-1}$ may be replaced by the pseudoinverse.

While the accuracy of this problem is independent of the choice of basis for $\mathbf{V}_h$, the conditioning of the resulting system may be impacted by a poor choice of basis. For the purposes of this work we express $\mathbf{P_m}$ in terms of the scaled and shifted Taylor monomials $p^\alpha = \frac{1}{\alpha!}\left(\frac{\mathbf{y}-\mathbf{x}}{\delta}\right)^\alpha$.

The generation of quadrature weights satisfying Equation \ref{eqn:QPsaddle} requires that the integral be calculated exactly for each member of $\mathbf{V}_h$ to generate the vector $\mathbf{g}$. For some choices of $\mathbf{V}_h$, this may be done analytically. For example, if polynomial consistency is desired without enforcing exactness of a non-local operator (i.e. selecting $\mathbf{V}_h = \mathbf{P}_m$), exact formulas exist to arbitrary order for the moments of polynomials over balls in arbitrary dimension \cite{folland2001integrate}. For the peridynamic kernels considered here, $\mathbf{g}$ may be calculated analytically using a straightforward change to polar/spherical coordinates. However, we point out for the sake of generality that integrals lacking a closed form solution may be solved using a numerical quadrature rule to integrate polynomials on the unit ball (with e.g. Gauss quadrature) to obtain $\mathbf{g}$, which may then be used to obtain the quadrature weights for the unstructured quadrature points $\mathbf{X}_p$.

\subsection{Analysis of quadrature error}\label{sec:approx}
To consider the impact of the singularity on the accuracy of the quadrature rule, we consider that the singularity in the kernel may be expressed in terms of a bounded numerator and an $\alpha^{th}$ order singularity at the center of the ball, and estimate the reduction in convergence rate caused by the singularity. We consider in our analysis a general non-local operator of the form
$$ \mathcal{L}_\delta[u] = \int_{B(\mathbf{x},\delta)} K(\mathbf{x},\mathbf{y}) u(\mathbf{y}) d\mathbf{y} $$
and afterward specialize to the peridynamic case. We note that for operators with a constant solution in their null-space (i.e. $\mathcal{L}_\delta[1] = 0$) it may be easily shown that this more general form encompasses that used in Equation \ref{eq:generalCase}.
\begin{thm}
  Consider for fixed $\mathbf{x}$ a kernel of the form $K(\mathbf{x},\mathbf{y}) = \frac{n(\mathbf{x},\mathbf{y})}{|\mathbf{y}-\mathbf{x}|^\alpha}$, where the numerator satisfies $n(\mathbf{x},\mathbf{y}) \leq C_n$ for all $\mathbf{y} \in B(\mathbf{x},\delta)$. A set of quadrature weights obtained from Equation \ref{eq:quadQP} with the choice of $\mathbf{V}_h = P_m \cup S_{K,n,\mathbf{x}}$ for $u \in C^{p}$, $p>n$ and ${m}>0$ satisfies the following pointwise error estimate, with $C>0$ independent of the particle arrangement.

  $$\left|\int_{B(\mathbf{x},\delta)} K(\mathbf{x},\mathbf{y}) u(\mathbf{y})\,d\mathbf{y} - \sum_{j \in X_q} K(\mathbf{x},\mathbf{x}_j) u_j \omega_j \right| \leq C \delta^{n+1-\alpha+d}$$
\end{thm}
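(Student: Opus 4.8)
The plan is to subtract from $u$ its degree-$n$ Taylor polynomial about $\mathbf{x}$ and exploit the reproduction property built into \eqref{eq:quadQP}. Write $u = T_n u + R_n u$, where $T_n u \in P_n$ is the truncated Taylor expansion of $u$ at $\mathbf{x}$; since $u \in C^{p}$ with $p>n$, the remainder obeys $|R_n u(\mathbf{y})| \le C \|u\|_{C^{n+1}(B(\mathbf{x},\delta))}\,|\mathbf{y}-\mathbf{x}|^{n+1}$ for all $\mathbf{y}\in B(\mathbf{x},\delta)$. Because $K(\mathbf{x},\cdot)\,T_n u \in S_{K,n,\mathbf{x}}\subset \mathbf{V}_h$, the constraint $I_h[p]=I[p]$ for all $p\in\mathbf{V}_h$ forces $\int_{B(\mathbf{x},\delta)} K(\mathbf{x},\mathbf{y})\,T_n u(\mathbf{y})\,d\mathbf{y} = \sum_j K(\mathbf{x},\mathbf{x}_j)\,T_n u(\mathbf{x}_j)\,\omega_j$. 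Subtracting, the quadrature error collapses to the remainder alone,
$$E \;=\; \int_{B(\mathbf{x},\delta)} K(\mathbf{x},\mathbf{y})\, R_n u(\mathbf{y})\, d\mathbf{y} \;-\; \sum_{j} K(\mathbf{x},\mathbf{x}_j)\, R_n u(\mathbf{x}_j)\, \omega_j .$$

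Next I would estimate the two pieces separately. For the integral, insert $|K(\mathbf{x},\mathbf{y})| \le C_n |\mathbf{y}-\mathbf{x}|^{-\alpha}$ together with the Taylor bound and pass to polar coordinates about $\mathbf{x}$:
$$\left|\int_{B(\mathbf{x},\delta)} K R_n u\, d\mathbf{y}\right| \le C\,C_n\,\|u\|_{C^{n+1}} \int_{B(\mathbf{x},\delta)} |\mathbf{y}-\mathbf{x}|^{n+1-\alpha}\,d\mathbf{y} = C\,C_n\,\|u\|_{C^{n+1}}\,|\mathbb{S}^{d-1}| \int_0^{\delta} r^{\,n+d-\alpha}\,dr,$$
which equals $C\,\delta^{\,n+1-\alpha+d}$, the radial integral converging under the standing assumptions (and $n+1-\alpha+d>0$ being exactly what makes the claimed rate a genuine positive power). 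For the sum, the same pointwise bounds give $\big|\sum_j K(\mathbf{x},\mathbf{x}_j) R_n u(\mathbf{x}_j)\omega_j\big| \le C\,C_n\,\|u\|_{C^{n+1}} \sum_j |\mathbf{x}_j-\mathbf{x}|^{n+1-\alpha}|\omega_j|$. When $n+1\ge\alpha$ — which covers the peridynamic kernels, whose effective singularity order is $\alpha=1$ — we have $|\mathbf{x}_j-\mathbf{x}|^{n+1-\alpha}\le \delta^{n+1-\alpha}$, so this is at most $\delta^{n+1-\alpha}\|\bm\omega\|_1$; the more singular range $n+1<\alpha$ would instead require a dyadic decomposition of $B(\mathbf{x},\delta)\setminus\{\mathbf{x}\}$ into annuli together with the point separation to control how many nodes sit at each scale. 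Either way, combining with the integral estimate reduces the theorem to the bound $\|\bm\omega\|_1 \le C\,\delta^{d}$ with $C$ independent of the arrangement.

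That bound on $\|\bm\omega\|_1$ is the crux. The key observation is that $\bm\omega$ is the minimum-$\ell^2$-norm solution of $\mathbf{B}\bm\omega=\mathbf{g}$, so $\|\bm\omega\|_2 \le \|\tilde{\bm\omega}\|_2$ for every competitor $\tilde{\bm\omega}$ that also reproduces $\mathbf{V}_h$. Under the standard meshfree regularity assumption on $\mathbf{X}_q$ — comparable fill distance $h$ and separation, so that $N_q \sim (\delta/h)^d$ — one can construct such a competitor by localized polynomial reconstruction on a Voronoi-type partition of $B(\mathbf{x},\delta)$, obtaining weights of size $\mathcal{O}(h^d)$ and hence $\|\tilde{\bm\omega}\|_2 \le C\,\delta^{d} N_q^{-1/2}$, with $C$ depending only on the regularity constants. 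Then $\|\bm\omega\|_1 \le N_q^{1/2}\|\bm\omega\|_2 \le N_q^{1/2}\|\tilde{\bm\omega}\|_2 \le C\,\delta^{d}$, and $|E|\le C\,\delta^{n+1-\alpha+d}$ follows. A last technical point: a node placed exactly at $\mathbf{x}$ must be excluded (or, as in the peridynamic difference form $u(\mathbf{y})-u(\mathbf{x})$, contributes nothing), so every sum above is well defined. I expect the construction and stability analysis of the reference quadrature $\tilde{\bm\omega}$ to be the main obstacle; the Taylor-remainder splitting and the polar-coordinate estimate are routine once that ingredient is in place.
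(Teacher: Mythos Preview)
Your Taylor-remainder splitting, the use of the reproduction constraint to replace $I[p_n]$ by $I_h[p_n]$, and the polar-coordinate bound on the continuous piece are exactly what the paper does. The divergence is in bounding the discrete sum. The paper never invokes $\ell^2$-optimality or builds a competitor quadrature; instead it uses the constant-reproduction constraint directly---since $P_0\subset\mathbf{V}_h$ (this is the purpose of the hypothesis $m>0$), one has $\sum_j\omega_j=\int_{B(\mathbf{x},\delta)}1\,d\mathbf{y}=|B(\mathbf{x},\delta)|$---and simply writes
\[
\sum_{j}|\mathbf{x}_j-\mathbf{x}|^{\,n+1-\alpha}\,\omega_j\;\le\;\delta^{\,n+1-\alpha}\sum_{j}\omega_j\;=\;\delta^{\,n+1-\alpha}\,|B(\mathbf{x},\delta)|\;\le\;C\,\delta^{\,n+1-\alpha+d}.
\]
This is much shorter than your route via $\|\bm\omega\|_1\le N_q^{1/2}\|\bm\omega\|_2\le N_q^{1/2}\|\tilde{\bm\omega}\|_2$, and avoids entirely the competitor construction you flag as the main obstacle. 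On the other hand, your version is more careful: the paper's chain tacitly treats every $\omega_j$ as nonnegative (otherwise one needs $\sum_j|\omega_j|$ rather than $\sum_j\omega_j$, and the constant-reproduction identity no longer closes the estimate), and the QP \eqref{eq:quadQP} does not enforce positivity. So you have in effect noticed a gap in the paper's argument and sketched a way to fill it; the paper's proof is shorter precisely because it glosses over this point.
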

\begin{proof}
  Let $I[u](\mathbf{x}) = \int_{B(\mathbf{x},\delta)} K(\mathbf{x},\mathbf{y}) u(\mathbf{y}) d\mathbf{y}$ and $I_h[u] =  \sum_{j \in X_q} K(\mathbf{x},\mathbf{x}_j) u_j \omega_j$. Let $p_n$ denote the $n^{th}$-order truncated Taylor series of $u$ about $\mathbf{x}$ with associated remainder $r_n$, such that  
  \begin{align*}
    u(\mathbf{y}) &= p_n(\mathbf{y}) + r_n(\mathbf{y})\\
    &= \underset{|\alpha| {\leq} n}{\sum} \frac{D^\alpha u(\mathbf{y})}{\alpha!}\left(\mathbf{y}-\mathbf{x}\right)^\alpha + \underset{|\beta| = n+1}{\sum} R_\beta(\mathbf{y}) \left(\mathbf{y}-\mathbf{x}\right)^\beta,
  \end{align*}
  where
  $$R_\beta = \frac{|\beta|}{\beta!} \int_0^1 (1 - t)^{|\beta| - 1} D^\beta f\left(\mathbf{y} + t(\mathbf{y}-\mathbf{x})\right) dt.$$
   We now seek an expression of the approximation error in terms of its radial distance from the singularity. We define the unit vector $\mathbf{e}= \frac{\mathbf{y}-\mathbf{x}}{|\mathbf{y}-\mathbf{x}|}$, so that $\mathbf{y}-\mathbf{x} = |\mathbf{y}-\mathbf{x}| \mathbf{e}$. We may then write
   \begin{align*}
     |u-p_n| (\mathbf{y}) = |r_n|(\mathbf{y}) &= \left| \underset{|\beta| = n+1}{\sum} R_\beta(\mathbf{y}) (\mathbf{y}-\mathbf{x})^\beta\right|\\
     & \leq  \underset{|\beta| = n+1}{\sum} | R_\beta(\mathbf{y}) |\, |\mathbf{y}-\mathbf{x}|^{|\beta|}\,| \mathbf{e}^\beta|\\
       &\leq |\mathbf{y}-\mathbf{x}|^{n+1} \underset{|\beta| = n+1}{\sum} | R_\beta(\mathbf{y})|,
   \end{align*}
   so that
   $$|R_\beta(\mathbf{y})| \leq \frac{1}{\beta!} \underset{|\alpha| = |\beta|}{\max} \underset{\mathbf{y} \in B(\mathbf{x},\delta)}{\max} |D^\alpha u(\mathbf{y})|,$$
   and we obtain the bound on the ball, for $C_b>0$,
   $$|u-p_n|({\mathbf{y}}) \leq  C_b |\mathbf{y}-\mathbf{x}|^{n+1}.$$
   To bound the approximation error, we apply the triangle inequality
  \begin{equation*}
    |I[u] - I_h[u]| \leq  |I[u] - I[p_n]| +  |I[p_n] - I_h[u]|
  \end{equation*}
  and obtain from the reproducing condition {of polynomial $p_n$}
  \begin{equation*}
    |I[u] - I_h[u]| \leq  |I[u] - I[p_n]| +  |I_h[p_n] - I_h[u]|
  \end{equation*}
  We now bound these two terms quantifying the continuous and discrete errors separately. The first term may be bound as follows.
   \begin{align*}
     |I[u] - I[p_n]| \leq& \int_{B(\mathbf{x},\delta)} |K(\mathbf{x},\mathbf{y})|\, |u(\mathbf{y})-p_n(\mathbf{y})| d\mathbf{y}\\
     \leq& C_b \int_{B(\mathbf{x},\delta)} \left| \frac{n(\mathbf{x},\mathbf{y})}{|\mathbf{y}-\mathbf{x}|^\alpha} \right| \, |\mathbf{y}-\mathbf{x}|^{n+1} d\mathbf{y}\\
     \leq& C_b C_n \int_{B(\mathbf{x},\delta)} |\mathbf{y}-\mathbf{x}|^{n+1-\alpha} d\mathbf{y}\\
     \leq& C_b C_n \delta^{n+1-\alpha + d}
   \end{align*}
   For the second term in the error estimate:
    \begin{align*}
      |I_h[p_n] - I_h[u]| &\leq \sum_{j \in X_Q} | K(\mathbf{x},\mathbf{x}_j) |\, |u_j-p_n(\mathbf{x}_j)| \omega_j\\
      &\leq \sum_{j \in X_Q} \frac{| n(\mathbf{x},\mathbf{x}_j) |\, |u_j-p_n(\mathbf{x}_j)|}{|\mathbf{x}_j-\mathbf{x}|^\alpha} \omega_j\\
      &\leq C_n C_b \sum_{j \in X_Q} |\mathbf{x}_j-\mathbf{x}_i|^{n+1-\alpha} \omega_j \\
      &\leq C_n C_b \delta^{n+1-\alpha} |B(\mathbf{x}_\delta)|\\
      &\leq C_n C_b \delta^{n+1-\alpha+d}
    \end{align*}
    and therefore we obtain by taking $C = C_b C_n$
    $$|I[u] - I_h[u]| \leq C \delta^{n+1-\alpha+d}.$$
\end{proof}

\begin{remark}
Without loss of generality, we may apply this scalar error estimate component-wise to the peridynamic model. In light of the scaling of the material constants in Equation \ref{eq:scaling} as $O(\delta^{-(d+1)})$, we expect a convergence rate of $O(\delta^{n-\alpha})$ %
when this quadrature rule is applied to the bond-based model. For the remainder of this work, we will take as a reproducing space the minimal reproducing set such that the above proof still holds, namely
$$\mathbf{V}_h = P_0 \cup S_{K,n,\mathbf{x}}.$$

\end{remark}
\begin{remark}
This approach is asymptotically compatible in the sense that, to maintain unisolvency as the horizon size is reduced, the number of points within the horizon must be maintained as roughly constant while the horizon goes to zero. As we will discuss in the following section, for a quasi-uniform particle distribution this corresponds to $\delta = M\, h$ for a fixed $M>0$. Then, we see from the analysis that the discrete non-local operator converges to the continuous one as $\delta \rightarrow 0$. Since the non-local theory converges to the local theory for a suitable choice of kernel scaling, we may see that the nonlocal discrete solution converges to the local continuous solution as $(h,\delta)\rightarrow 0$ at the same rate, provided that the reproducing space is of higher order than the kernel singularity ($n>\alpha$).
\end{remark}
\begin{remark}
The quadrature weights obtained from the constrained QP are tied specifically to the location $\mathbf{x}$ and the quadrature points within $B(\mathbf{x},\delta)$. We will explicitly denote this dependence of the quadrature rule on the location by denoting $\left\{\omega_{j,i}\right\}_{j=1,\dots,N_q}$ as the set of weights associated with the ball centered at $\mathbf{x}_i$. Consider now a point $\mathbf{x}_k$ lying within the horizon of two points $\mathbf{x}_i$ and $\mathbf{x}_j$ (i.e. $|\mathbf{x}_k-\mathbf{x}_i| \leq \delta$ and $|\mathbf{x}_k-\mathbf{x}_j| \leq \delta$). There is no guarantee that $\omega_{k,i} = \omega_{k,j}$ and as such we cannot expect that the current approach will exactly conserve linear momentum when discretizing the bond-based model. However, since the method will converge to the continuous solution, we may expect that energy will be conserved in the limit as the model is refined. While some particle discretizations of peridynamics \cite{parks2008implementing} preserve linear momentum, this comes at the expense of an inconsistent discretization. It is problem dependent whether it is more advantageous to conserve momentum or maintain a consistent method. We note however that this trade-off between consistency and conservation is characterisic of many meshfree methods and we illustrate in the remainder of this paper that high fidelity results may be obtained in a particle-based modelling framework without formal conservation.
\end{remark}

\section{Numerical setting}\label{sec:peri}

We now discretize the peridynamic model over our domain $\Omega$ with a collection of points $\mathbf{X}_h = \left\{ \mathbf{\mathbf{x}}_i \right\}_{i = 1,\dots, N_p} \subset \Omega^\delta$, where 
\begin{equation}
  h = \underset{\mathbf{\mathbf{x}} \in \Omega^\delta}{\sup} \underset{1 \leq j \leq N_p}{\min} || \mathbf{\mathbf{x}} - \mathbf{\mathbf{x}}_j ||_2
\end{equation}
is a length scale characterizing the resolution of the point cloud.
This denotes the largest ball that may be inserted into $\Omega^\delta$ without intersecting another point in $\mathbf{X}_h$. We define the separation distance
\begin{equation}
  q_{\mathbf{X}_h} = \frac12 \underset{i \neq j}{\min} ||\mathbf{\mathbf{x}}_i - \mathbf{\mathbf{x}}_j||_2,
\end{equation}
and refer to $\mathbf{X}_h$ as quasi-uniform if there exists a constant $c_{qu} > 0$ such that
\begin{equation}
q_{\mathbf{X}_h} \leq h \leq c_{qu} q_{\mathbf{X}_h}.
\end{equation}
Such a condition is sufficient to prove polynomial unisolvency for a variety of meshfree techniques (e.g. \cite{wendland2004scattered,mirzaei2011generalized,mirzaei2013direct,trask2017high}), given mild conditions on the geometry. For simplicity, we focus in this work on outlining the new discretization and leave a formal analysis for a future work. We stress here the important connection between the current approach and the depth of approximation theory published regarding generalized moving least squares.

Because we are interested in the asymptotic compatibility of our discretization, for all results presented in this work we fix the horizon as a constant multiple of the particle spacing, $\delta = M h$. Therefore, during refinement, the horizon and the discretization are both taken to zero at the same rate in contrast to many peridynamics discretizations. For simplicity, we will always denote this limit as $\delta \rightarrow 0$ but stress here that this also implies $h \rightarrow 0$ at the same rate.

 To obtain a solution to the optimization problem, it is necessary that the quadrature points within the ball be unisolvent over the desired reproducing set. Informally, this requires that the horizon be set such that there are at least enough neighbors to represent the desired degree of polynomial space. Unless otherwise noted, for all results presented in this work we select $\delta = M\,h$, where $M = m+\frac12$, and $m$ is the order of polynomial reproduction, so that more particles are involved in the quadrature as higher-order reproduction is sought. This may be understood as similar to finite difference methods, for which the stencil size increases as the method moves to higher order.
 
We partition $\mathbf{X}_h$ into \textit{interior points} $\mathbf{X}^I_h = \mathbf{X}_h \cap \Omega$ and \textit{boundary points} $\mathbf{X}^B_h = \mathbf{X}_h \cap \partial^\delta \Omega$. To discretize the IDE at interior points, we apply the following implicit time discretization to Equation \ref{eq:LPSequation} and apply collocation at each point in $\mathbf{X}^I_h$ to obtain

\begin{equation}\label{eq:discreteLPS}
\rho_i \left( \frac{\mathbf{u}^{n+1}_i - 2 \mathbf{u}^n_i + \mathbf{u}^{n-1}_i}{\Delta t^2} \right) = \sum_{j \in B(\mathbf{\mathbf{x}}_i,\delta)} K_{ij} (\mathbf{u}_j^{n+1} - \mathbf{u}_i^{n+1}) \omega_{j,i} + \mathbf{f}_i,
\end{equation}
where $\omega_{j,i}$ are the quadrature weights defined in the previous section and we denote the point evaluation of a function at the $n^{th}$ timestep $f(\mathbf{\mathbf{x}}_i,t^n) = f_i^{n}$. In this way, we obtain one equation for each particle in the domain. In the static case, we obtain the following equation.
\begin{equation}\label{eq:discreteLPSstatic}
-\sum_{j \in B(\mathbf{\mathbf{x}}_i,\delta)} K_{ij} (\mathbf{u}_j - \mathbf{u}_i) \omega_{j,i} = \mathbf{f}_i,
\end{equation}
where we omit the time dependence of $\mathbf{u}$.

To enforce volumetric Dirichlet conditions over boundary points, we simply enforce that for all $\mathbf{\mathbf{x}}_i \in \mathbf{X}_h^B$,
\begin{equation}
  \mathbf{u}_i = \mathbf{u}_i^D
\end{equation}
for a given displacement field $\mathbf{u}^D$. We postpone a discussion of traction-free conditions until Section \ref{sec:damage}.

By requiring that these equations hold for all points, we may assemble a global stiffness matrix for the problem. While a thorough discussion of the optimal solution of these linear systems is beyond the scope of the current work, we note that the lack of symmetry in the quadrature weights causes the resulting stiffness matrices to be asymmetric. Preliminary scaling results using an unsmoothed aggregation AMG preconditioner with a GMRes iterative solver exhibited a roughly $O(N_p^{1.5})$ scaling for the two dimensional results presented in this paper when solving static problems. For the unsteady problems, the diagonal contribution approaches optimal $O(N_p)$ convergence as $\Delta t \rightarrow 0$ and the linear system becomes more diagonally dominant.

\section{Truncation error and manufactured solutions}\label{sec:manu1}

We now use manufactured solutions to confirm both the truncation error analysis of Section \ref{sec:approx} and to confirm that the solution of Equation \ref{eq:discreteLPSstatic} converges to both non-local solutions with high-order and to the local solution in an asymptotically compatible manner. In this section, we refer to exact local and non-local solutions as $\mathbf{u}_{ex}$ and $\mathbf{u}_{ex}^\delta$, respectively and to the numerical solution $\mathbf{u}_{h}$. For both cases we consider convergence on the square $\Omega = \left\{-\pi,\pi\right\}\times\left\{-\pi,\pi\right\}$. We will consider both the truncation error $\epsilon_t = |\mathcal{L}_\delta[\mathbf{u}]-|\mathcal{L}^h_\delta[\mathbf{u}]|$ and the local and non-local solution errors $\epsilon_{s} = |\mathbf{u}_{ex} - \mathbf{u}_h|$ and $\epsilon_{s,\delta} = |\mathbf{u}^\delta_{ex} - \mathbf{u}_h|$. We quantify error in the following $\ell_2$-norm, taken as root-mean-square of the point evaluation of the magnitude of a vector field at each particle, namely
\begin{equation}
||\mathbf{F}||_2 = \sqrt{ \frac{\sum_p |\mathbf{F}|_p }{N_p}  }.
\end{equation}
For all results, the domain is first discretized by a Cartesian lattice with spacing $h$. The particles are then perturbed by a uniformly distributed random variable with magnitude $0.1 h$ to remove any symmetries that might lead to accelerated convergence.
\subsection{Convergence to non-local solution}

We take here the displacement $\mathbf{u} = \left<(1-x)^6+(1-y)^6,0\right>$, from which a symbolic mathematics package may be used to obtain
\begin{align*}
  \mathcal{L}^\delta[\mathbf{u}] =& \left< \left(280 - 840 x + 1260 x^2 -840 x^3 +210 x^4 -280 y +420 y^2 -280 y^3 +70 y^4\right) \right.\\
  + &\left( 126 -210 x +105 x^2 - 42y +21 y^2\right)\delta^2\\
   + & \left. 5 \delta^4, 0 \right> .
\end{align*}
We present in Figure \ref{fig:manu_nonloc} convergence rates for the non-local truncation and manufactured solution errors. We apply increasing orders $n$ for the polynomial reproducing space (i.e. $S_{K,n,\mathbf{x}}$ in Equation \ref{eq:reproducingspace}), using quadratic, cubic, and quartic polynomials. In agreement with the scaling analysis of Section \ref{sec:approx}, we observe $O(\delta^{n-1})$ convergence consistent with the $(\alpha=1)$-order singularity in the peridynamic kernel. We empirically observe $O(\delta^{n})$ convergence in the solution error for even-order polynomial spaces, although increasing to cubic reproduction from quadratic reproduction does not increase the convergence rate. We attribute this behavior to the fact that the integrand of the non-local operator is anti-symmetric on the ball for odd-order polynomial moments. 
\begin{figure}[t!]
    \centering
    \includegraphics[width=0.49\textwidth]{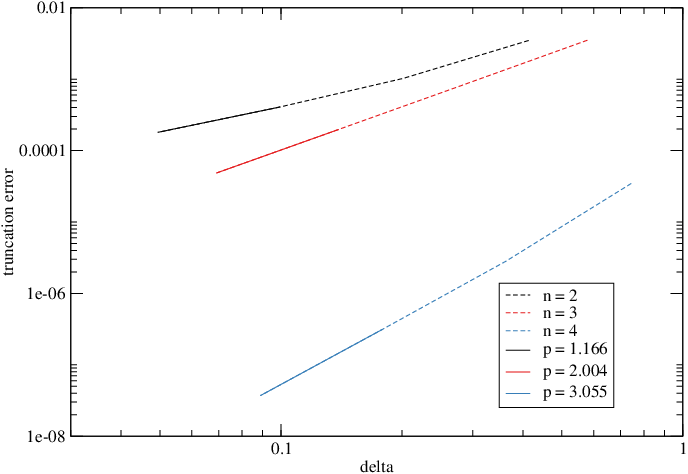}
    \includegraphics[width=0.49\textwidth]{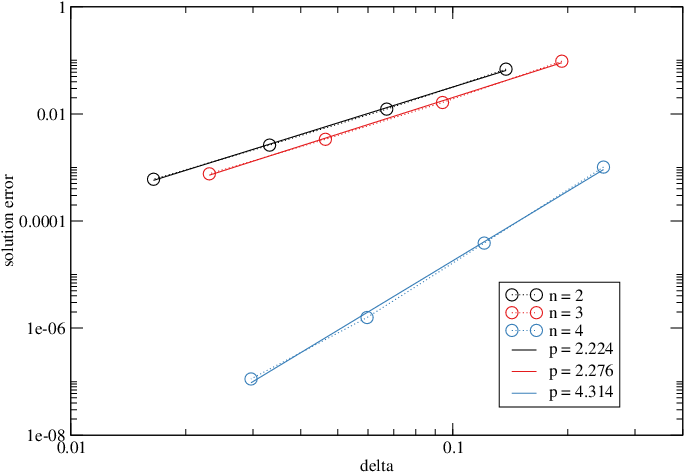}
    \caption{Convergence of truncation error $||\mathcal{L}^h_\delta[\mathbf{u}]-\mathcal{L}_\delta[\mathbf{u}]||_2$ (\textit{left}) and manufactured solution error $||\epsilon_{s}||_2$ (\textit{right}) for non-local case. Power regression of form $||\mathcal{L}^h_\delta[\mathbf{u}]-\mathcal{L}_\delta[\mathbf{u}]||_2 = C h^p$ demonstrates convergence rate in the regime $h << 1$.}
    \label{fig:manu_nonloc}
\end{figure}

\subsection{Convergence to local solution}
We consider now the smooth solution $\mathbf{u} = \left<\sin x \sin y,\cos x \cos y \right>$, which converges as $O(\delta^2)$ to the local elasticity operator
\begin{equation}
\mathcal{L}[\mathbf{u}] = -\frac{6}{5} \left<\sin x \sin y,\cos x \cos y \right>,
\end{equation}
following Emmrich~\cite{emmrich2007analysis} with bulk modulus $\kappa = 1$ and Poisson ratio $\nu = \frac14$. %
In Figure \ref{fig:manu_loc} we observe that the non-local operator converges to the local operator as $O(\delta^2)$ for both the truncation error and manufactured solution error, independent of the choice of reproducing space. This is due to the fact that the non-local theory itself converges to the local as $O(\delta^2)$ independent of the choice of discretization used.
\begin{figure}[t!]\label{fig:manu_loc}
    \centering
    \includegraphics[width=0.49\textwidth]{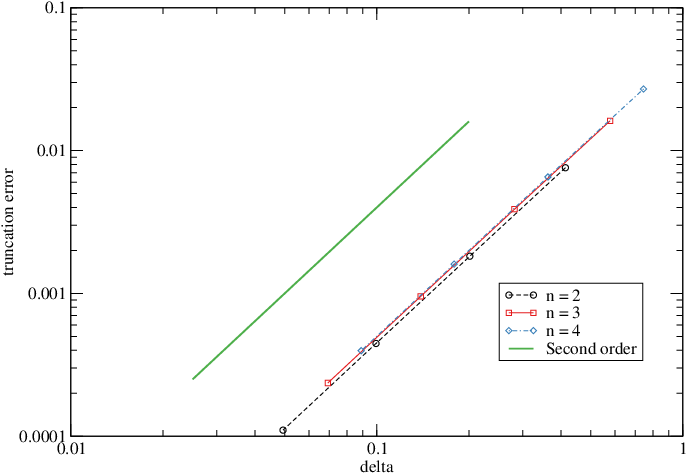}
    \includegraphics[width=0.49\textwidth]{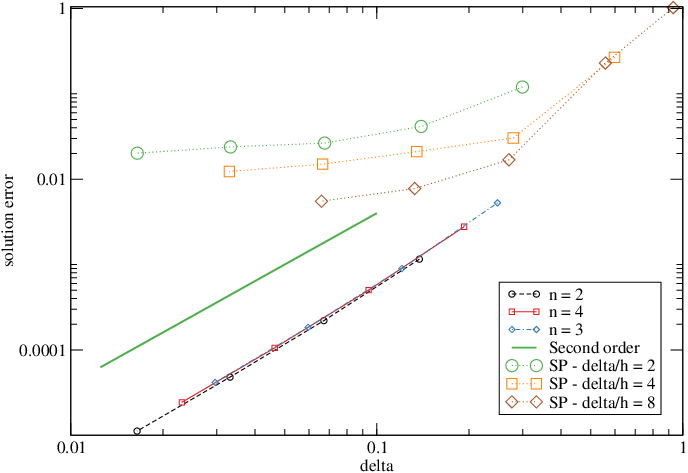}
    \caption{Convergence of truncation error $||\mathcal{L}^h_\delta[u]-\mathcal{L}[u]||_2$ (\textit{left}) and manufactured solution error $||\epsilon_{s,\delta}||_2$ (\textit{right}) for local case. Second order convergence is presented for comparison, along with convergence results for standard peridynamics quadrature (denoted SP).}
\end{figure}

\section{Damage modeling and traction boundary conditions}\label{sec:damage}

In bond-based models for peridynamic fracture, the fracture is typically modeled by the breaking of bonds due to a critical strain criteria \cite{silling_2005_2}. We demonstrate here that the meshfree quadrature is compatible with the typical workflow of a peridynamics simulation. Given the stencil associated with the point $\mathbf{\mathbf{x}}_i$, a bond $B_{ij}$ is associated with each neighbor $\mathbf{x}_j \in B(\mathbf{\mathbf{x}}_i,\delta)$. Each bond is then assigned a state of either being broken or unbroken, and the quadrature rule is modified to read \cite{madenci2014peridynamic}
\begin{equation}
\tilde{\omega}_{j,i} =
  \begin{cases}
    \omega_{j,i}, & \text{ if bond is unbroken }  \\
	0, & \text{ if bond is broken }  .
  \end{cases}
\end{equation}

Defining the strain of a bond as
\begin{equation}
s = \frac{ |\mathbf{u}_j - \mathbf{u}_i| - |\mathbf{x}_j-\mathbf{x}_i|}{|\mathbf{x}_j-\mathbf{x}_i|},
\end{equation}
we adopt for this work the prototype microelastic brittle (PMB) model \cite{silling_2005_2,madenci2014peridynamic} and break at each timestep any bond exceeding a critical stress value $s_0$. This parameter may be tuned to match the fracture energy $G_c$ of a given material via \cite{madenci2014peridynamic}, as

\begin{equation}\label{eq:s0}
s_0 =
  \begin{cases}
    \sqrt{\frac{G_c}{ \left( \frac{6 \mu}{\pi} + \frac{16}{9 \pi^2} \left( K - 2 \mu \right) \right) \delta }}, & \text{ d = 2 }  \\
	\sqrt{\frac{G_c}{ \left( 3 \mu + \left(\frac34\right)^4 \left(K - \frac{5 \mu}{3}\right) \right) \delta }}, & \text{ d = 3 }  .
  \end{cases}
\end{equation}
where $\mu$ is the shear modulus and $\kappa$ is the bulk modulus.

To introduce a crack into the initial configuration of a problem, a preprocessing step may be introduced to break any bond $B_{ij}$ for which the line segment $\overline{\mathbf{x}_i \mathbf{x}_j}$ intersects the crack surface. We have empirically observed that when this process is used in conjunction with the quadrature rule introduced in this work, the resulting solution converges to the corresponding local theory with a traction-free boundary condition imposed at the crack surface, without the need to introduce any corrections for so-called surface effects \cite{le2017surface}. As such, traction-free boundary conditions may be imposed by introducing dummy particles on $\partial^\delta \Omega$ and breaking any bonds that intersect a subset of $\partial \Omega$ upon which zero-traction conditions apply. Such a process is consistent with extrapolatory dummy particle techniques that are popular within a variety of non-local meshfree methods (see for example \cite{macia2011theoretical}). 

Because this approach is specific to bond-based peridynamic models, we opt to illustrate its effectiveness with numerical examples rather than pursue a formal discussion of its accuracy. In the following section, we demonstrate convergence of this approach to the local case using manufactured solutions. We will then apply this approach to realistic engineering applications in Section \ref{sec:applications}, considering the static loading of a crack in Section \ref{sec:typeIcrack} and the fracture dynamics of a violent impact problem in Section \ref{sec:KWexperiment}.

\subsection{Asymptotic compatibility of damage model to local traction-free condition}
We consider again the box $\Omega = \left\{-\pi,\pi\right\}\times\left\{-\pi,\pi\right\}$. We now introduce a crack along the $x=0$ plane with normal $\mathbf{n} = \left<1,0 \right>$ where we will apply a traction-free condition, namely that $\sigma \cdot \mathbf{n} = 0$, where $\sigma$ is the local linearly elastic stress tensor. For the choice of material parameters $K = 1$ and $\nu = \frac14$ it may be shown that the displacement field $\mathbf{u}_{tf} = \left<x+y,-x-3y)\right>$ satisfies these conditions (i.e. $\mathcal{L}^\delta[\mathbf{u}_{tf}] = \mathcal{L}[\mathbf{u}_{tf}] = 0$). This case thus provides an extension to the standard patch test, where the discretization should be able to reproduce linear displacements while recovering a traction-free condition along the $x=0$ plane as $\delta \rightarrow 0$.

To discretize this case with the bond-breaking model, we solve the manufactured problem as in Section \ref{sec:manu1}, by imposing $\mathbf{u}_{tf}$ as a Dirichlet condition on $\Omega^\delta$, but now apply the crack model and break any bond crossing the $x=0$ plane. As in the previous cases, we seek to demonstrate that as $\delta \rightarrow 0$, we recover the local solution, and in doing show that the damage model recovers the local theory as the horizon tends to zero. To post-process these results, we generate the Delaunay triangulation associated with our particles and plot the pointwise evaluation of x-component of error normal to the crack along the $y=0$ line (i.e. $(\mathbf{u}_{tf}-\mathbf{u}^\delta_h)(x,0)$) in Figure \ref{fig:manu_crack}. We see that in both the $\ell_2$-norm and the sup-norm (i.e. $||f||_\infty = \underset{\mathbf{x} \in \Omega}{\max} f(\mathbf{x})$) the discrete non-local solution converges to the local solution as $O(\delta)$.

\begin{figure}[t!]
    \centering
    \includegraphics[width=0.49\textwidth]{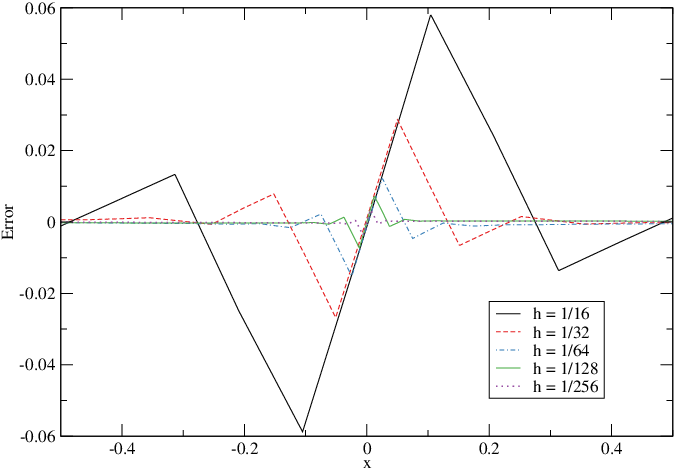}
    \includegraphics[width=0.49\textwidth]{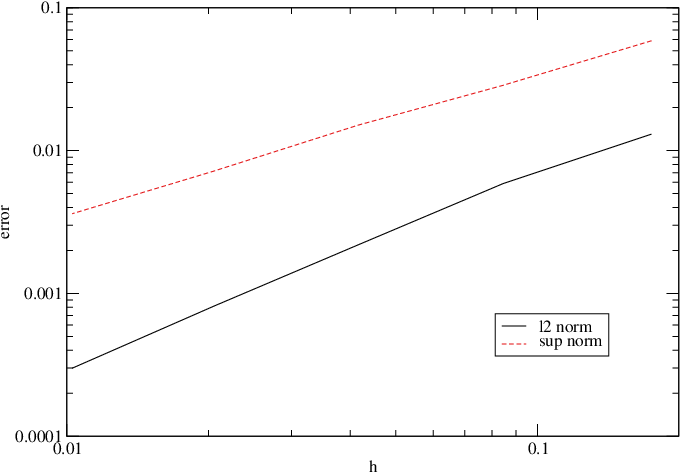}
    \caption{Pointwise convergence of $u^\delta_h \rightarrow u_{tf}$ along the line $y = 0$ demonstrating asymptotic compatibility of crack model (\textit{left}) and comparison of $\ell_2$ and sup-norm (\textit{right}) for the manufactured crack case.}
\label{fig:manu_crack}
\end{figure}

\section{Applications}\label{sec:applications}
Having demonstrated the asymptotic compatibility of both the discretization and the damage model to the local theory, we now present examples using our approach to consider practical engineering problems. In Section \ref{sec:typeIcrack} we simulate the static loading of a Type-I crack for which displacements satisfying the local theory are available in closed form. In Section \ref{sec:KWexperiment} we consider the Kalthoff-Winkler experiment, wherein the fracture dynamics driven by an impactor striking a pre-notched place generates an experimentally reproducable crack pattern. These two cases demonstrate the ability of the discretization to resolve both static and dynamic problems. For both cases, a stress singularity exists at the tip of the cracks that poses challenges for some discretizations.

\subsection{Static loading: Type I crack problem}\label{sec:typeIcrack}

We consider now the Type-I crack problem, in which a crack of length $2a$ is subjected to biaxial loading in the infinite plane, as illustrated in Figure \ref{fig:type1crack}. The displacement field $\mathbf{u} = \left<u,v\right>$ for this problem is
\begin{align}
2\mu u &= \frac{\kappa-1}{2} \sigma_0 \sqrt{r_1r_2} \cos(\frac{\theta_1+\theta_2}{2}) - \frac{\sigma_0 r^2}{\sqrt{r_1r_2}}\sin \theta \sin (\theta - \frac12 \theta_1 -\frac12 \theta_2) \label{eqn:6.1}\\
2\mu v &= \frac{\kappa+1}{2} \sigma_0 \sqrt{r_1r_2}\sin(\frac{\theta_1+\theta_2}{2}) - \frac{\sigma_0 r^2}{\sqrt{r_1r_2}} \sin \theta \cos(\theta - \frac12 \theta_1 - \frac12 \theta_2) ,
\end{align}
where $\mu = \frac{3 \kappa (1-2\nu)}{2(1+\nu)}$, and $\kappa = 3-4\nu$. The coordinates $r$, $r_1$, $r_2$, $\theta_1$ and $\theta_2$ are specified in Figure \ref{fig:type1crack}. The detailed derivation of this analytic solution may be found in \cite{sun2012fracture}.

\begin{figure}[t!]
\centering
\includegraphics[width=0.49\textwidth]{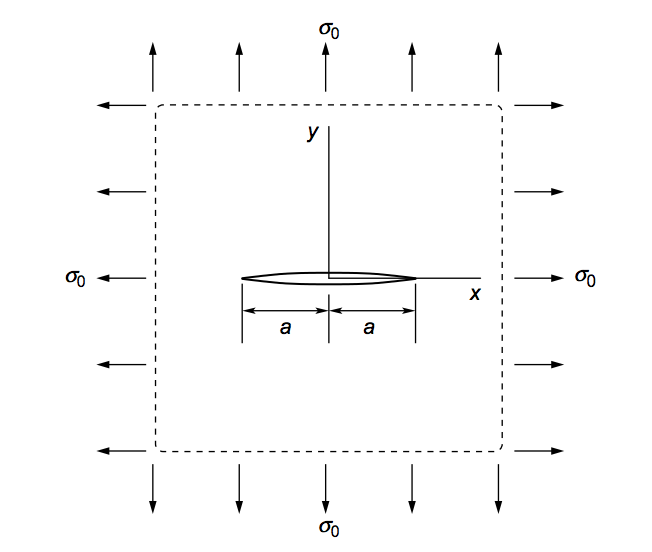}
\includegraphics[width=0.49\textwidth]{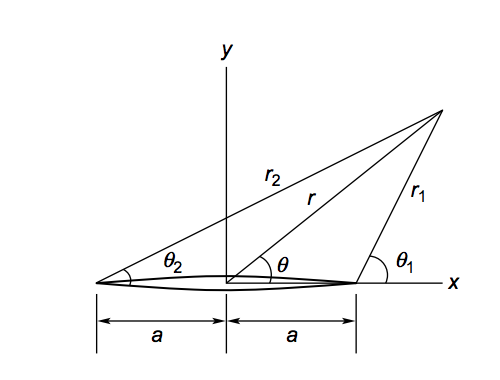}
\caption{Type-I fracture problem geometry (\textit{left}) and coordinate system used to construct analytic solution to displacement field (\textit{right}).}
\label{fig:type1crack}
\end{figure}

To model this problem, we restrict the infinite domain to the box $\Omega = [-2,2] \times [-2,2]$ and impose the analytic solution as a Dirichlet condition around the perimeter. We select as geometric and material parameters: $a = 1$, $\kappa = 4\times 10^4$,$\nu = 1/4$ and $\sigma_0 = 1$. As in the previous example, we break any bond crossing the crack before running the simulation. We run this crack problem with $32^2$, $64^2$ and $128^2$ points, keeping a fixed ratio of $\delta/h = 3.5$. We provide a comparison of the numerical result to the exact solution along the $y=0$ and $x=0$ planes in Figure \ref{fig:disalongaxis}. As expected given results for the manufactured solution in the previous section, the solution reproduces the displacement field and converges as ${\color{red}O(\delta)=O(h)}$.

\begin{figure}[t!] 
\centering
\includegraphics[width=0.49\textwidth]{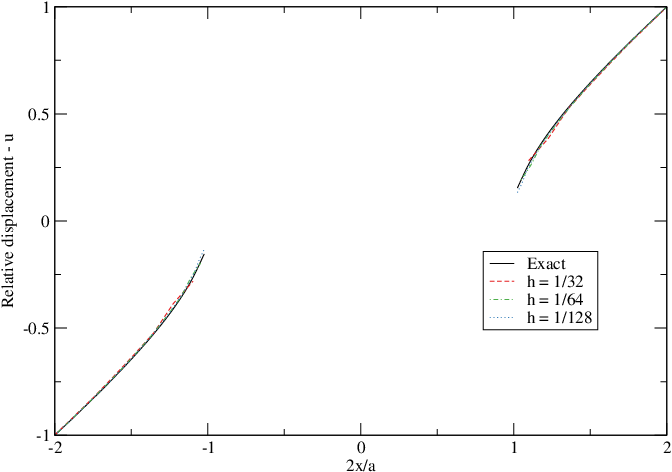}
\includegraphics[width=0.49\textwidth]{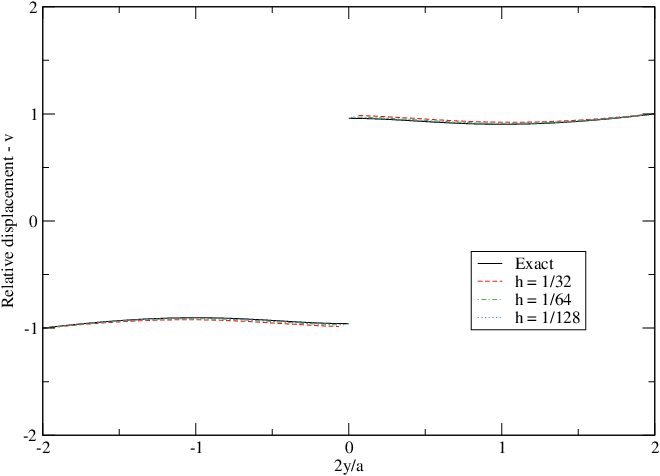}
\caption{Horizontal displacement along $y=0$ axis (\textit{left}) and vertical displacement along $x=0$ axis (\textit{right}). Displacements are scaled by $||u||_{L_\infty}$ and $||v||_{L_\infty}$, respectively.}
\label{fig:disalongaxis}
\end{figure}

\subsection{Dynamic fracture: the Kalthoff-Winkler experiment}\label{sec:KWexperiment}

We finally consider a dynamic fracture problem in which a steel plate is struck by a cylindrical impactor. The plate is pre-notched (Figure \ref{fig:KHsetup}), and upon impact an experimentally reproducable crack grows outward from the pre-notch tip. The fracture pattern behaves differently depending on regimes governed by the impactor velocity. We consider the parameters given in Figure \ref{fig:KHsetup} which match those investigated previously with a particle-based peridynamics discretization \cite{silling2001peridynamic}, for which the fracture is experimentally observed to form a $68^\circ$ angle to the initial vertical pre-notch.

Homogeneous Dirichlet boundary conditions $\mathbf{u}=\left<0,0\right>$ are imposed on the top of the plate to the left and right of the notches, and impact is modeled by imposing a fixed displacement of $\mathbf{u} = \left<0,-32 t\right>$ between the two notches. The remaining portions of the boundary at the sides and bottom are treated as free surfaces following Section \ref{sec:damage}, by breaking any bonds that cross $\overline{\Omega} \cap \overline{\partial^\delta \Omega}$. Equations \ref{eq:discreteLPS} are then evolved forward in time, breaking any bonds that violate the strain criteria at the end of each timestep. We apply the material properties used in \cite{silling2001peridynamic}, for which handbook data provides a yield strength of $2000\,MPa$ and a fracture toughness of $90\,MPa-m^{1/2}$. For these properties, we obtain a bond breaking criterion of $s_0 = 0.0099/\sqrt{\delta}$ following Equation \ref{eq:s0}.

We present results in Figure \ref{fig:KHresult1} using $256\times512$ particles, $\delta = 4.0 h$ and a timestep of size $\Delta t = 2.5e-7$, selected to match the CFL condition imposed by the crack speed. As illustrated in the figure, we see crack growth at an angle of $68^\circ$ to the pre-notch tips, and at the end of the simulation three fragments remain. In what follows, we systematically investigate the influence of the ratio $\delta/h$, the effect of under-resolution, and the impact of under-resolving the timestep $\Delta t$.

In Figure \ref{fig:KHresult2} we repeat the experiment for a range of $\delta/h$ and compare the top-right fragment shape. For reference, the experimentally observed $68^\circ$ angle is superimposed at the notch tip. The extant of damage is visualized by plotting for each particle the fraction of its bonds that are broken. We see that the use of the consistent quadrature rule provides consistent predictions of the fragment shape, independent of the choice of $\delta/h$. This is in contrast to existing work in particle discretizations of peridynamics, where $\delta/h$ is often taken as large to mitigate grid imprinting effects \cite{parks2008implementing}.

We next demonstrate that for $\Delta t = 1.0e-6$ and $\delta/h = 4$ the ability of this approach to reproduce the crack angle when the simulation is under-resolved. We present in Figure \ref{fig:KHresult4} the upper-right fragment when the domain is discretized with $32\times64$, $64\times128$ and $128\times256$ particles. For all but the coarsest resolution we reproduce the $68^\circ$ angle. 

We note that because we solve the governing equations implicitly at each time-step, we are assuming quasi-static loading of the crack whereby the crack propogates at a timescale of $\Delta t$ while the elastic modes propogating faster than $\Delta t$ are effectively damped out. By selecting the timestep based on the CFL condition, we justify this assumption. For comparison however, we compare to results in which $\Delta t$ is selected four and sixteen times larger in Figure \ref{fig:KHresult3}. In this case, fragments of similar size are produced but the extant of the damage near the crack is substantially more diffuse.
\begin{figure}[t!]
    \centering
    \includegraphics[width=0.87\textwidth]{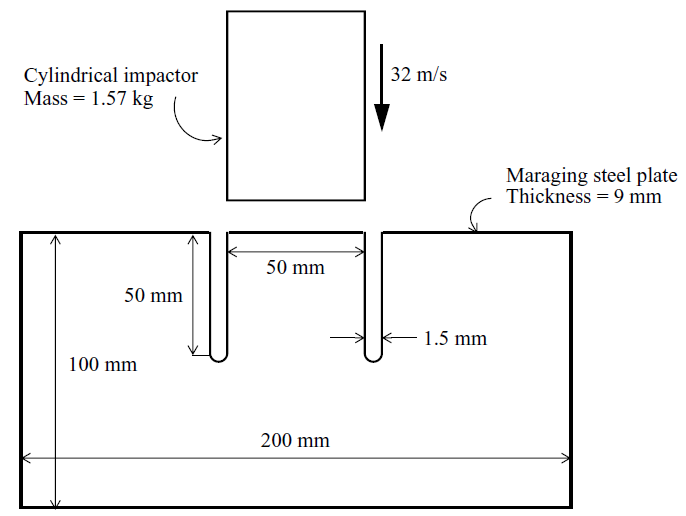}
    \caption{Experimental setup for the Kalthoff-Winkler experiment, taken from \cite{silling2001peridynamic}.}
\label{fig:KHsetup}
\end{figure}

\begin{figure}[t!]
    \centering
    \includegraphics[width=0.47\textwidth]{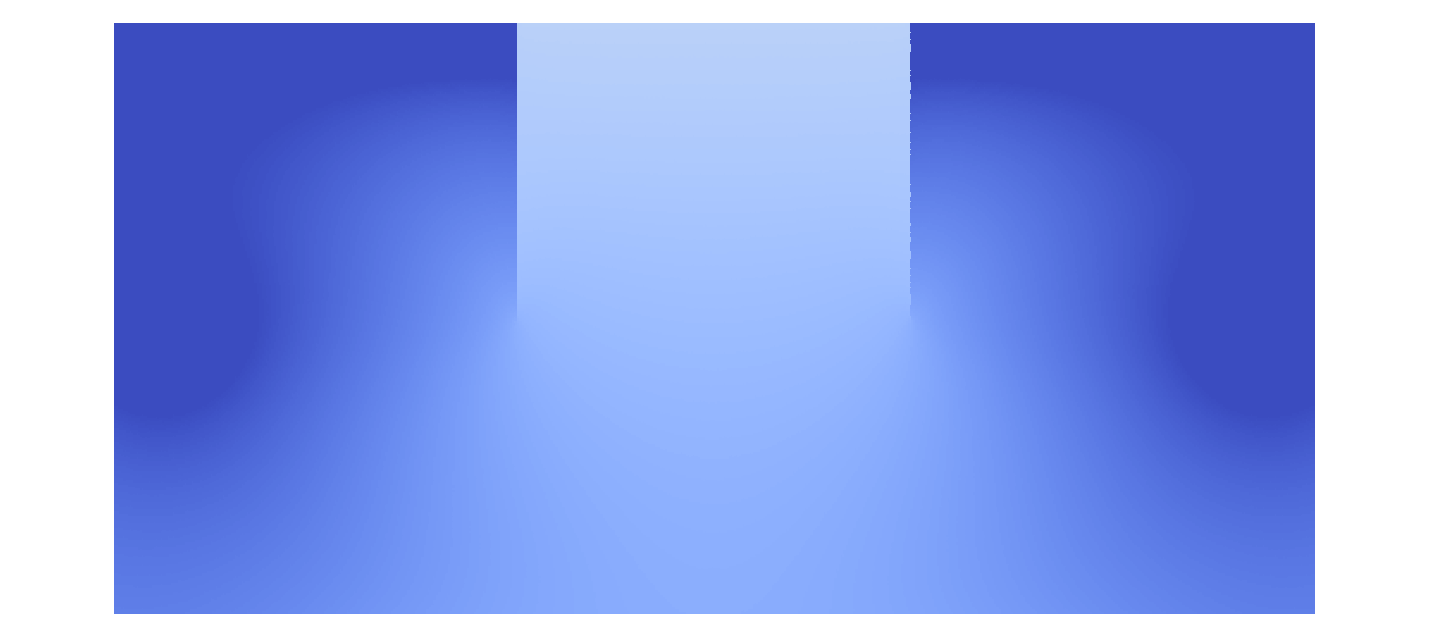}
    \includegraphics[width=0.47\textwidth]{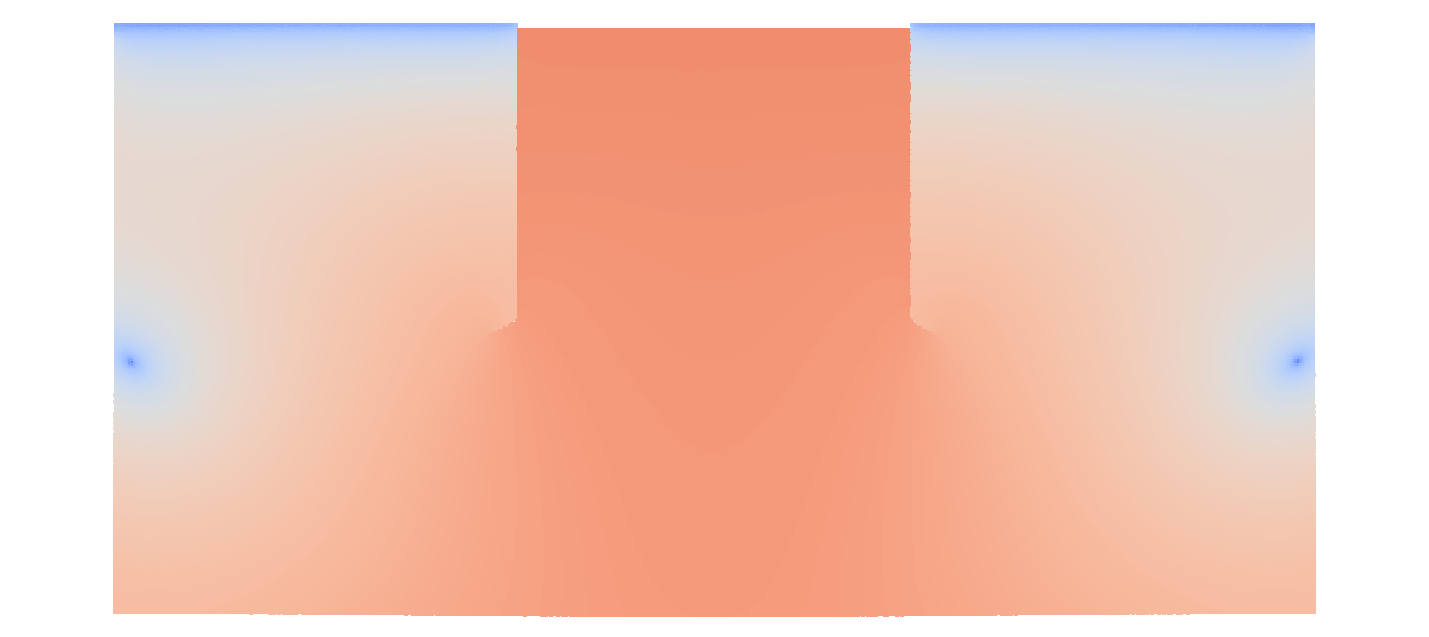}\\
    \includegraphics[width=0.47\textwidth]{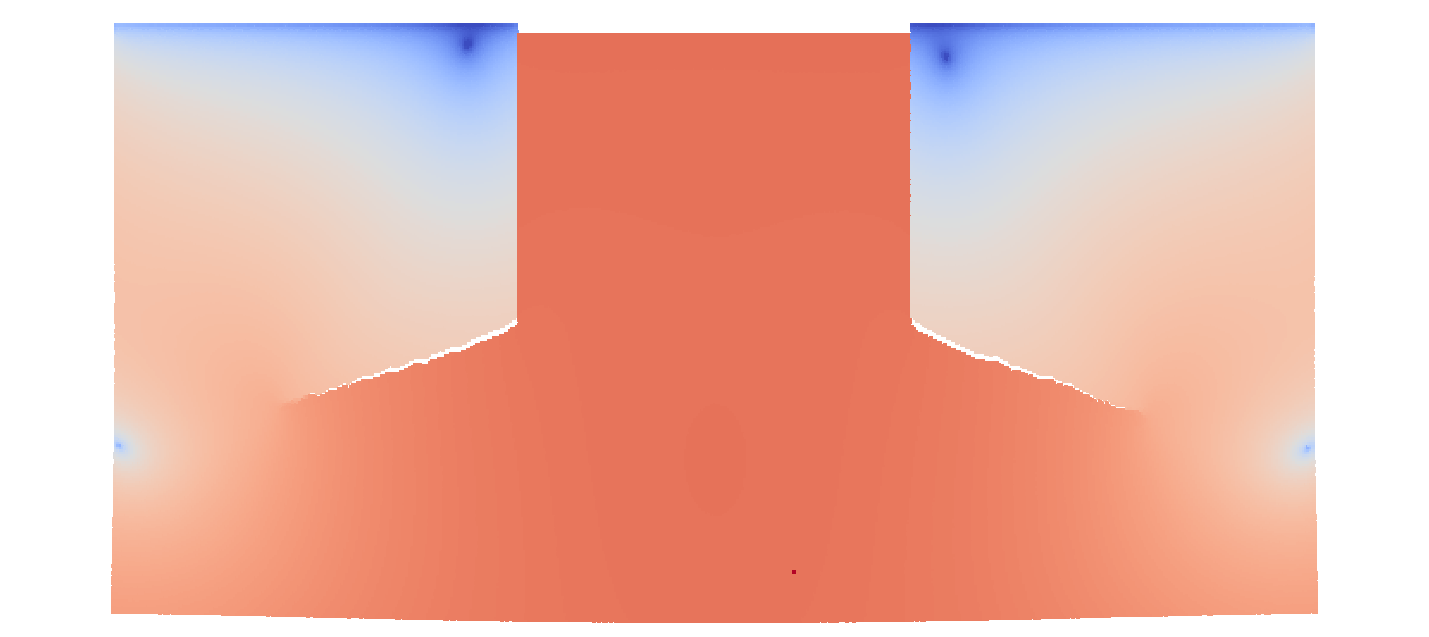}
    \includegraphics[width=0.47\textwidth]{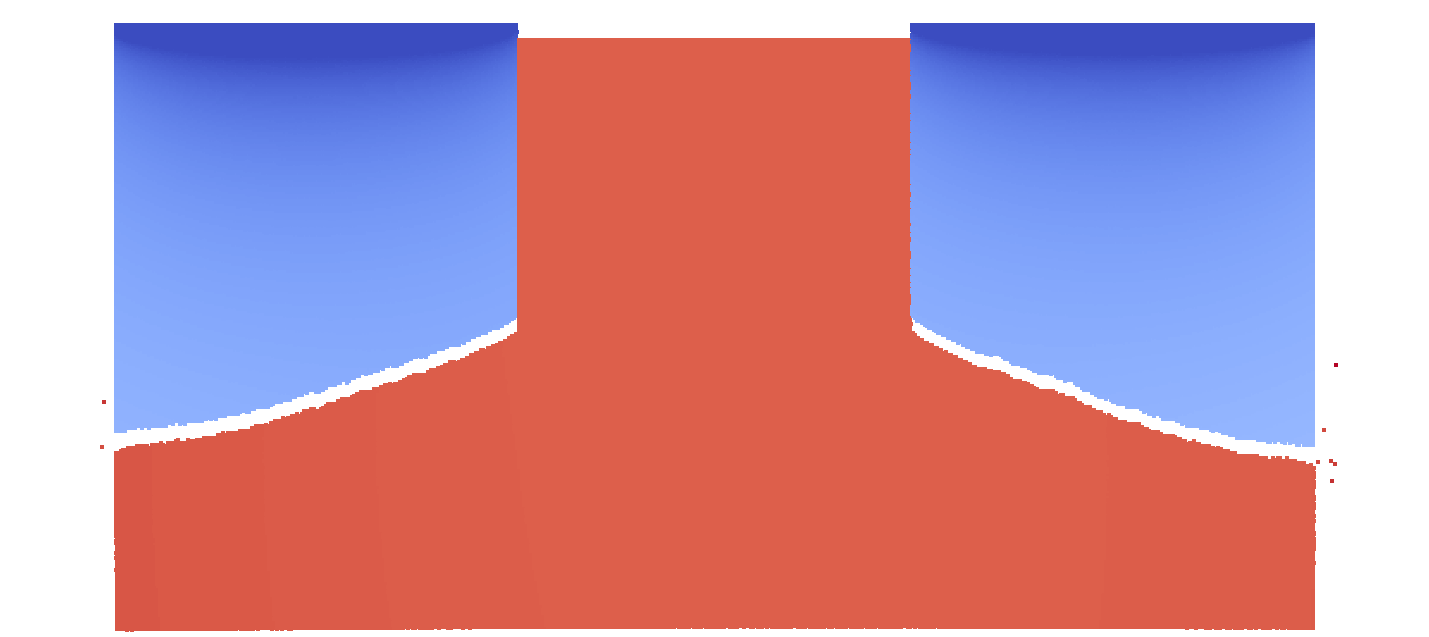}
    \caption{Crack evolution for Kalthoff-Winkler case after $1,100,200$ and $300$ timesteps. Plot is colored by the log of the displacement magnitude ($\log_{10} ||u||).$}
    \label{fig:KHresult1}
\end{figure}

\begin{figure}[t!]
    \centering
    \includegraphics[width=0.23\textwidth]{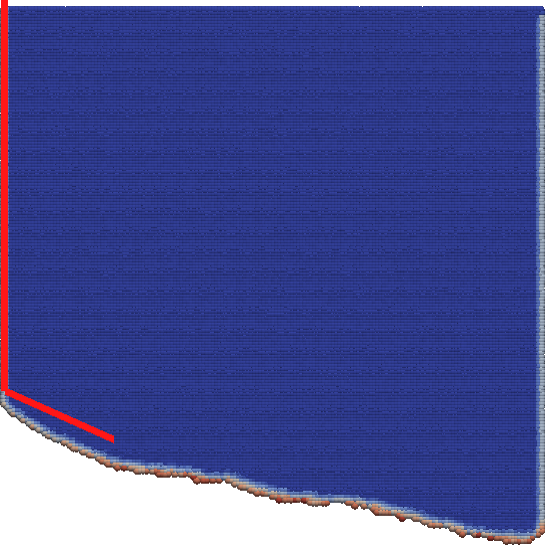}
    \includegraphics[width=0.23\textwidth]{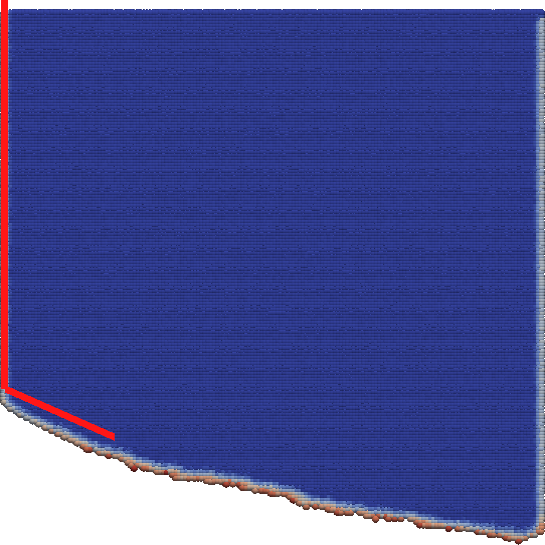}
    \includegraphics[width=0.23\textwidth]{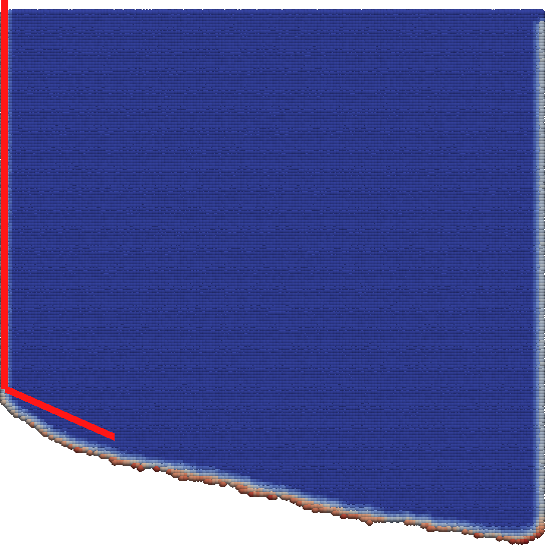}
    \includegraphics[width=0.23\textwidth]{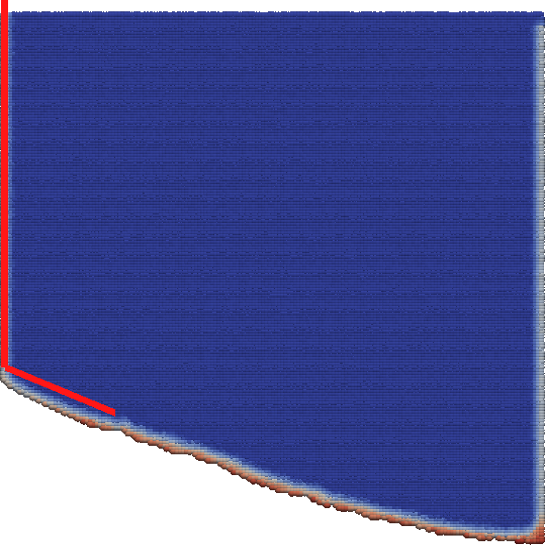}
    \caption{Top-right fragment from Kalthoff-Winkler case for $\delta \in \left\{2.5 h, 3.0 h, 3.5 h, 4.0 h\right\}$. Experimentally observed $68^\circ$ fracture angle at tip of pre-notch is illustrated for comparison.}
    \label{fig:KHresult2}
\end{figure}

\begin{figure}[t!]
    \centering
    \includegraphics[width=0.31\textwidth]{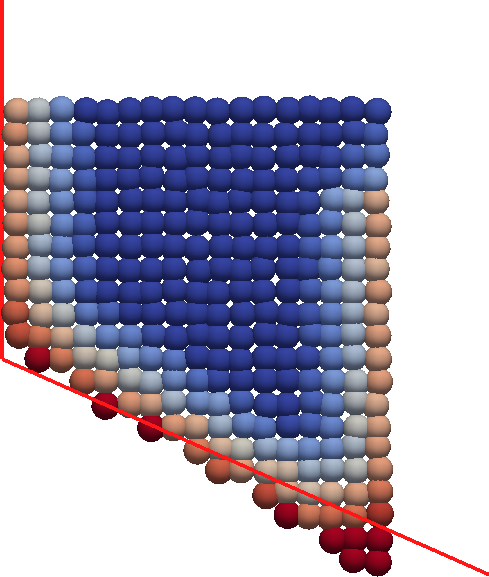}
    \includegraphics[width=0.31\textwidth]{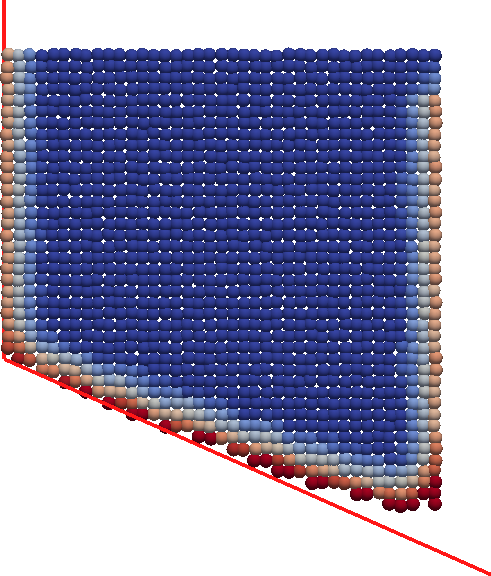}
    \includegraphics[width=0.31\textwidth]{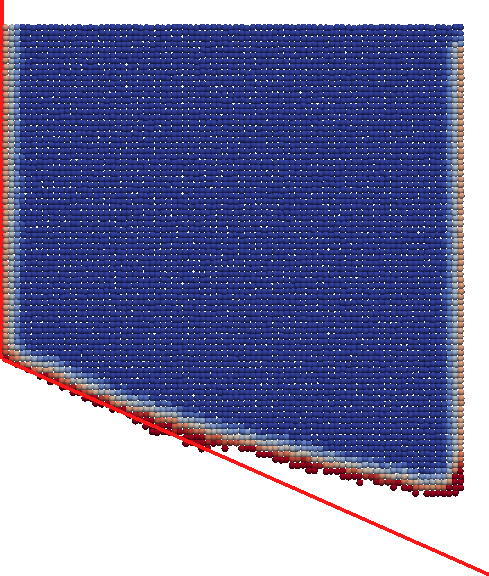}
    \caption{Top-right fragment from Kalthoff-Winkler case for $\delta = 4.0\,h$ with $h \in \left\{1/32, 1/64, 1/128\right\}$. Experimentally observed $68^\circ$ fracture angle at tip of pre-notch is illustrated for comparison.}
    \label{fig:KHresult4}
\end{figure}

\begin{figure}[t!]
    \centering
    \includegraphics[width=0.6\textwidth]{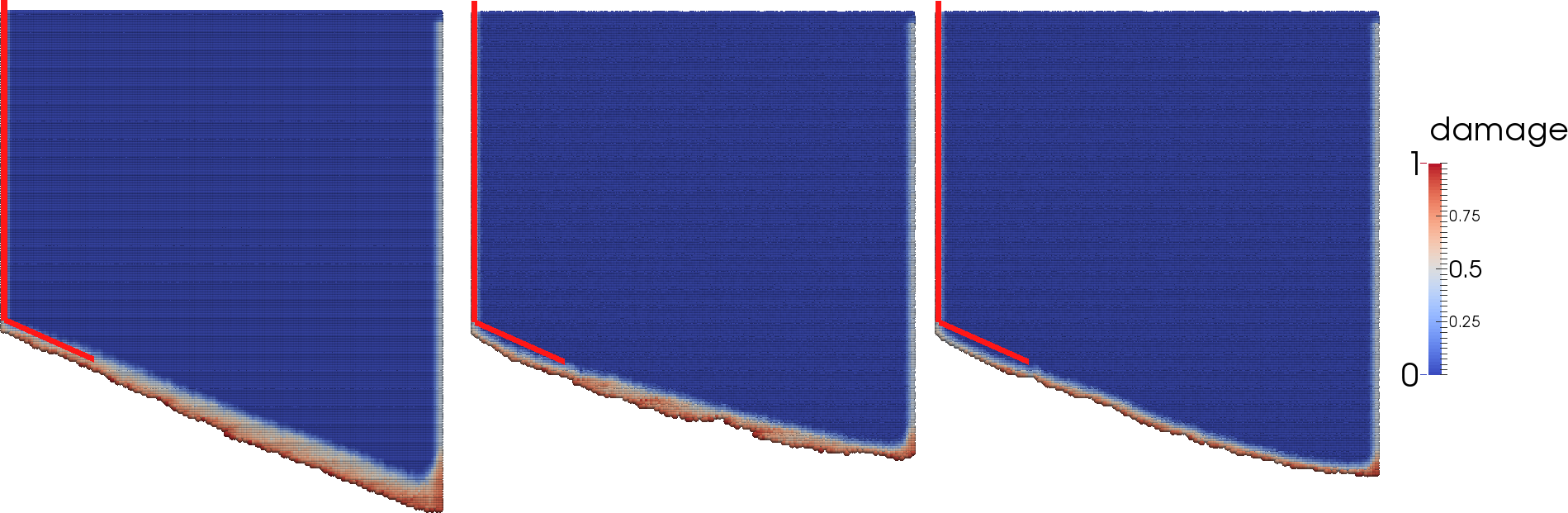}
    \caption{Top-right fragment from Kalthoff-Winkler case for $\delta = 4.0 h$ and $\Delta t = 4.0e-6$ (\textit{left}), $\Delta t = 1.0e-6$ (\textit{center}) and $\Delta t = 2.5e-7$ (\textit{right}). While the larger timestep violates the quasi-static loading assumption, the resulting fragments reproduce crack angle, albeit with more damage.}
    \label{fig:KHresult3}
\end{figure}

\section{Conclusion}

We have introduced a new optimization based meshfree quadrature rule which may be used to generate discretizations of general nonlocal problems, and have shown that this approach is able to provably obtain high-order convergence. We have specifically investigated the suitability of such approaches for generating strong form discretizations of bond-based peridynamic models. For these models, the meshfree generation of quadrature weights fits naturally into existing workflows for such problems, and we have demonstrated the compatibility of this approach with existing damage models. We have demonstrated numerically that for problems with and without fracture, we are able to obtain a particle discretization that obtains first and second order convergence to the local theory, respecitively. We therefore obtain for the first time an asymptotically compatible particle method for peridynamics, both for the PDE solver and for the modeling of fracture.

There are close ties to the optimization problem proposed here and the approximation theory underpinning generalized moving least squares methods. In future work, we will demonstrate rigorously under what conditions a solution to the optimization problem exists. The approach presented here is also applicable to a wide range of nonlocal problems, and we will continue to use this optimization based quadrature as a means to build new non-local particle discretizations built upon a sound approximation theory.

\section*{Acknowledgements}
N. Trask acknowledges support from the National Science Foundation MSPRF program, the Sandia National Laboratories LDRD program, and by the U.S. Department of Energy Office of Science, Office of Advanced Scientific Computing Research, Applied Mathematics program as part of the Colloboratory on Mathematics for Mesoscopic Modeling of Materials (CM4), under Award Number DE-SC0009247. {H. You acknowledges support through the NSF-MSGI program for his work as a summer intern at SNL, and from the National Science Foundation under award DMS 1620434. Y. Yu acknowledges support from National Science Foundation under award DMS 1620434. M.L. Parks acknowledges support from the U.S. Department of Energy Office of Science, Office of Advanced Scientific Computing Research, Applied Mathematics program as part of the Colloboratory on Mathematics for Mesoscopic Modeling of Materials (CM4).} 

\bibliographystyle{unsrt}
\bibliography{yyu}

\begin{thebibliography}{10}

\bibitem{silling_2000}
Stewart~A Silling.
\newblock Reformulation of elasticity theory for discontinuities and long-range
  forces.
\newblock {\em Journal of the Mechanics and Physics of Solids}, 48(1):175--209,
  2000.

\bibitem{macek_2007}
Richard~W. Macek and Stewart~A. Silling.
\newblock Peridynamics via finite element analysis.
\newblock {\em Finite Elements in Analysis and Design}, 43:1169--1178, 2007.

\bibitem{silling_2005_2}
E.~Askari S.A.~Silling.
\newblock A meshfree method based on the peridynamic model of solid mechanics.
\newblock {\em Computers and Structures}, 83:1526--1535, 2005.

\bibitem{emmrich2007analysis}
Etienne Emmrich and Olaf Weckner.
\newblock Analysis and numerical approximation of an integro-differential
  equation modeling non-local effects in linear elasticity.
\newblock {\em Mathematics and Mechanics of Solids}, 12(4):363--384, 2007.

\bibitem{mengesha_du_2014}
Tadele Mengesha and Qiang Du.
\newblock The bond-based peridynamic system with {D}irichlet-type volume
  constraint.
\newblock {\em Proceedings of the Royal Society of Edinburgh: Section A
  Mathematics}, 144(1):161–186, 2014.

\bibitem{seleson2016convergence}
Pablo Seleson and David~J Littlewood.
\newblock Convergence studies in meshfree peridynamic simulations.
\newblock {\em Computers \& Mathematics with Applications}, 71(11):2432--2448,
  2016.

\bibitem{emmrich2007well}
Etienne Emmrich, Olaf Weckner, et~al.
\newblock On the well-posedness of the linear peridynamic model and its
  convergence towards the navier equation of linear elasticity.
\newblock {\em Communications in Mathematical Sciences}, 5(4):851--864, 2007.

\bibitem{tian_2014}
Xiaochuan Tian and Qiang Du.
\newblock Asymptotically compatible schemes and applications to robust
  discretization of nonlocal models.
\newblock {\em SIAM Journal on Numerical Analysis}, 52(4):1641--1665, 2014.

\bibitem{bobaru2009convergence}
Florin Bobaru, Mijia Yang, Leonardo~Frota Alves, Stewart~A Silling, Ebrahim
  Askari, and Jifeng Xu.
\newblock Convergence, adaptive refinement, and scaling in 1d peridynamics.
\newblock {\em International Journal for Numerical Methods in Engineering},
  77(6):852--877, 2009.

\bibitem{wendland2004scattered}
Holger Wendland.
\newblock {\em Scattered data approximation}, volume~17.
\newblock Cambridge university press, 2004.

\bibitem{folland2001integrate}
Gerald~B Folland.
\newblock How to integrate a polynomial over a sphere.
\newblock {\em The American Mathematical Monthly}, 108(5):446--448, 2001.

\bibitem{parks2008implementing}
Michael~L Parks, Richard~B Lehoucq, Steven~J Plimpton, and Stewart~A Silling.
\newblock Implementing peridynamics within a molecular dynamics code.
\newblock {\em Computer Physics Communications}, 179(11):777--783, 2008.

\bibitem{mirzaei2011generalized}
Davoud Mirzaei, Robert Schaback, and Mehdi Dehghan.
\newblock On generalized moving least squares and diffuse derivatives.
\newblock {\em IMA Journal of Numerical Analysis}, 32(3):983--1000, 2011.

\bibitem{mirzaei2013direct}
Davoud Mirzaei and Robert Schaback.
\newblock Direct meshless local {P}etrov-{G}alerkin (dmlpg) method: a
  generalized mls approximation.
\newblock {\em Applied Numerical Mathematics}, 68:73--82, 2013.

\bibitem{trask2017high}
Nathaniel Trask, Mauro Perego, and Pavel Bochev.
\newblock A high-order staggered meshless method for elliptic problems.
\newblock {\em SIAM Journal on Scientific Computing}, 39(2):A479--A502, 2017.

\bibitem{madenci2014peridynamic}
Erdogan Madenci and Erkan Oterkus.
\newblock {\em Peridynamic theory and its applications}, volume~17.
\newblock Springer, 2014.

\bibitem{le2017surface}
QV~Le and F~Bobaru.
\newblock Surface corrections for peridynamic models in elasticity and
  fracture.
\newblock {\em Computational Mechanics}, pages 1--20, 2017.

\bibitem{macia2011theoretical}
Fabricio Maci{\'a}, Matteo Antuono, Leo~M Gonz{\'a}lez, and Andrea Colagrossi.
\newblock Theoretical analysis of the no-slip boundary condition enforcement in
  sph methods.
\newblock {\em Progress of theoretical physics}, 125(6):1091--1121, 2011.

\bibitem{sun2012fracture}
C.~Sun and Z.~Jin.
\newblock {\em Fracture mechanics}.
\newblock Elsevier Academic Press, Amsterdam u.a. Waltham, MA, 2012.

\bibitem{silling2001peridynamic}
SA~Silling.
\newblock Peridynamic modeling of the {K}althoff-{W}inkler experiment.
\newblock {\em Submission for the 2001 Sandia Prize in Computational Science},
  2001.

\end{thebibliography}

\end{document}